\numberwithin{equation}{section}
\theoremstyle{plain}
\newtheorem{theorem}{Theorem}[section]
\newcommand{\G}{\Gamma}
\newcommand{\h}{\frac{1}{2}}
\newcommand{\al}{\alpha}
\begin{document}
\title{A note on a generalization of two well-known combinatorial identities via a hypergeometrric series approach}
\author{Arjun K. Rathie, Insuk Kim$^*$  and  Richard B. Paris   }
\maketitle

\begin{abstract}
In this note, we aim to provide generalizations of 
(i) Knuth's old sum (or Reed Dawson identity) and 
(ii) Riordan's identity 
using a hypergeometric series approach. 
\end{abstract}

\renewcommand{\thefootnote}{}

\footnotetext{{\ 2010 Mathematics Subject Classification.}  Primary: 03A10, 33C05, 33C15, 33C20, 33C90.  Secondary: 05A19, 39A10, 40A25, 33B15.}
\footnotetext{{\ Key words and phrases.} Knuth's old sums, Reed Dawson identity, Riordan identity, Hypergeometric summation formulas and identities.}
%\footnotetext{\ This work was supported by Wonkwang University in 2020.}
\footnotetext{* Corresponding author}

\section{Introduction and Results required}

We start with the following well-known combinatorial sums known as Knuth's old sums \cite {gre}, or alternatively as the  Reed Dawson identities viz.
\begin{equation}\label{1.1}
\sum_{k=0}^{2\nu} (-1)^{k}\, \binom{2\nu}{k}\, 2^{-k}\, \binom{2k}{k}=2^{-2\nu}\binom{2\nu}{\nu}
\end{equation}
and 
\begin{equation}\label{1.2}
\sum_{k=0}^{2\nu+1} (-1)^{k} \,\binom{2\nu+1}{k} \,2^{-k} \,\binom{2k}{k}=0.
\end{equation}
It is of interest to mention that Reed Dawson presented the above identities in a private communication to Riordan who recorded them in his well-known book \cite[pp.~71]{rio}.

Several different proofs of the above sums have been given in the literature; see the survey paper by Prodinger \cite{pro1}. 
Jonassen $\&$ Knuth \cite{jon} gave an elementary demonstration using a recursion for the binomial coefficients. Gessel \cite{gre} expressed the binomial coefficients as coefficients in appropriate generating functions. Rousseau \cite{jon} showed that the sums could be expressed in terms of the constant coefficient in the expansion of $(x^2+x^{-2})^n$  and Prodinger \cite{pro2} employed the Euler transformation.

In 1974, Andrews \cite[pp.~478]{and} established the above sums by employing the Gauss second summation theorem \cite{rai} given by
\begin{equation}\label{1.3}
		{}_{2}F_{1}\left[ \begin{matrix}\,\, a,\quad\quad b\\ \frac{1}{2}( a+b+1)\end{matrix}; \frac{1}{2}\right]
				=  \frac{\Gamma(\frac{1}{2})\,\Gamma(\h a+\h b+\h)}{\Gamma(\h a+\h) \,\Gamma(\h b+\h)}.
	\end{equation}
In 2004, Choi {\it et al.} \cite{cho} utilized the following terminating hypergeometric identities recorded, for example in \cite[pp.~126, 127]{rai},

\begin{equation}\label{1.4}
		{}_{2}F_{1}\left[ \begin{matrix}\,\, -2n,\,\,\,\,\al\\ 2\al\end{matrix}\,;\, 2\right]
				=  \frac{(\h)_n}{(\al+\h)_n}
\end{equation}
and 
\begin{equation}\label{1.5}
		{}_{2}F_{1}\left[ \begin{matrix}\,\, -2n-1,\,\,\,\,\al\\ 2\al\end{matrix}\,;\, 2\right]
				= 0,
\end{equation}
where $(\al)_n$ denotes the Pochhammer symbol  (or the rising factorial) for any complex number $\al(\not= 0)$ defined by 
\begin{equation*}
(\al)_{n}=\begin{cases} \al (\al +1)\ldots(\al+n-1), \;  & (n \in \mathbb{N}) \\ 
                                      1 ,  & \; (n=0) \end{cases}. 
\end{equation*}

Also, the following well-known combinatorial identities established by Riordan \cite{rio} are seen to be closely related to \eqref{1.1} and \eqref{1.2} viz.
\begin{equation}\label{1.6}
\sum_{k=0}^{2\nu} (-1)^{k}\, \binom{2\nu+1}{k+1}\, 2^{-k}\, \binom{2k}{k}=2^{-2\nu}\,(2\nu +1)\binom{2\nu}{\nu}
\end{equation}
and 
\begin{equation}\label{1.7}
\sum_{k=0}^{2\nu+1} (-1)^{k} \,\binom{2\nu+2}{k+1} \,2^{-k} \,\binom{2k}{k}=2^{-2\nu-1}\,(\nu +1)\binom{2\nu}{\nu}.
\end{equation}
Riordan \cite{rio} established \eqref{1.6} and \eqref{1.7} by the method of inverse relations.

Very recently, generalizations of the hypergeometric identities \eqref{1.4} and \eqref{1.5} were given by Kim {\it et al.} \cite{kim}, written in the following form:
\begin{align}\label{1.8}
		{}_{2}F_{1}&\left[ \begin{matrix}\,\, -2n,\,\,\,\,\al\\ 2\al+i\end{matrix}\,;\, 2\right]
				=  \frac{2^{-2\al -i}\,\G(\al)\,\G(1-\al)}{\G(\al +i)\,\G(1-2\al -i)}\\
&\quad\times \sum_{r=0}^{i} (-1)^{r}\,\binom{i}{r}\,\frac{\G(\h -\al-\h i+\frac{1}{2}r)\,(\h+\h i-\h r)_{n}}{\G(\h-\h i+\h r)\,(\al+\h+\h i-\h r)_{n}}\notag
\end{align}
and 
\begin{align}\label{1.9}
		{}_{2}F_{1}&\left[ \begin{matrix}\,\, -2n-1,\,\,\,\,\al\\ 2\al+i\end{matrix}\,;\, 2\right]
				= - \frac{2^{-2\al -i}\,\G(\al)\G(1-\al)}{\G(\al +i)\,\G(1-2\al -i)}\\
&\quad\times \sum_{r=0}^{i} (-1)^{r}\,\binom{i}{r}\frac{\G( -\al-\h i+\h r)\,(1+\h i-\h r)_{n}}{\G(-\h i+\h r)\,(\al+1+\h i-\h r)_{n}},\notag
\end{align}
which are valid for $i\in \mathbb{N}_{0}$.

In this note, we aim to provide generalizations of Knuth's old sums (or Reed Dawson identities) \eqref{1.1} and \eqref{1.2} and  Riordan's identities \eqref{1.6} and \eqref{1.7} in the most general form for any $i\in \mathbb{N}_{0}$. In order to obtain the results in the most general form for any $i\in \mathbb{N}_{0}$, we have to construct two master formulas. The results are established with the help of \eqref{1.8} and \eqref{1.9}.
In Section 3 we present cases of our general result that correspond to Knuth's old sums and Riordan's identities, together with some interesting new results.

\section{Generalizations} 

The generalizations of Knuth's old sums (or Reed Dawson identities) are given in the following theorem:
\begin{theorem}  For $i\in \mathbb{{N}}_{0}$, the following results hold true.
\begin{align}\label{2.1}
&\sum_{k=0}^{2\nu} (-1)^{k}\, \binom{2\nu+i}{k+i}\, 2^{-k}\, \binom{2k}{k}\\
&=\pi(2\nu+1)_{i}\,\frac{2^{2i}\, i\,!}{(2i)!}\,\sum_{r=0}^{i}\frac{2^{-r}\,\binom{i}{r}\, (\h+\h(i-r))_{\nu}}{(i-r)!\,\G^{2}(\h+\h(r-i))\,(1+\h(i-r))_{\nu}}\notag
\end{align}
and
\begin{align}\label{2.2}
&\sum_{k=0}^{2\nu+1} (-1)^{k}\, \binom{2\nu+1+i}{k+i}\, 2^{-k}\, \binom{2k}{k}\\
&=2\pi(2\nu+2)_{i}\,\frac{2^{2i}\, i\,!}{(2i)!}\,\sum_{r=0}^{i}\frac{2^{-r}\,\binom{i}{r}\, (1+\h(i-r))_{\nu}}{(i-r+1)!\,\G^{2}(\h(r-i))\,(\frac{3}{2}+\h(i-r))_{\nu}}.\notag
\end{align}

\end{theorem}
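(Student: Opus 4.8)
The plan is to recognize the left-hand sums of \eqref{2.1} and \eqref{2.2} as terminating ${}_2F_1$ series evaluated at argument $2$, and then to apply the generalized summation formulas \eqref{1.8} and \eqref{1.9} of Kim \emph{et al.} First I would rewrite the binomial coefficients in hypergeometric form. Using $\binom{2\nu+i}{k+i}=\binom{2\nu+i}{2\nu-k}$ and the standard identity $\binom{2k}{k}=\dfrac{(-1)^k\,4^k\,(-\nu)_k\cdots}{}$ — more precisely $\binom{2k}{k}=\dfrac{4^k\,(\h)_k}{k!}$ and $\binom{2\nu+i}{k+i}=\dfrac{(2\nu+i)!}{(k+i)!\,(2\nu-k)!}=\binom{2\nu+i}{i}\dfrac{(-2\nu)_k\,(-1)^k}{(i+1)_k}$ (after simplification) — I would collect the $k$-dependent factors into a ratio of Pochhammer symbols. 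The net effect is that
\[
\sum_{k=0}^{2\nu} (-1)^{k}\binom{2\nu+i}{k+i}2^{-k}\binom{2k}{k}
=\binom{2\nu+i}{i}\;{}_2F_1\!\left[\begin{matrix}-2\nu,\ \h\\ i+1\end{matrix};\,2\right],
\]
and similarly the sum in \eqref{2.2} equals $\binom{2\nu+1+i}{i}$ times ${}_2F_1[-2\nu-1,\h;\,i+1;\,2]$. The key observation is that with the parameter choice $\al=\h$ the denominator parameter $2\al+i$ in \eqref{1.8}, \eqref{1.9} becomes exactly $i+1$, so these are precisely the series produced by the master formulas.

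Next I would substitute $\al=\h$ into \eqref{1.8} and \eqref{1.9}. This requires a short limiting argument, since the prefactor $\dfrac{\G(\al)\G(1-\al)}{\G(\al+i)\G(1-2\al-i)}$ has $\G(1-2\al-i)=\G(-i)$ in the denominator, which is infinite for $i\in\mathbb{N}_0$ — so the prefactor vanishes and must be balanced against divergent terms inside the sum over $r$. Concretely, in \eqref{1.8} the terms with $r$ such that $\h-\al-\h i+\h r$ is a non-positive integer, i.e. $r\le i$ with $r\equiv i\pmod 2$ appropriately, make $\G(\h-\al-\h i+\h r)$ blow up; taking $\al\to\h$ and using $\G(\varepsilon-m)\sim (-1)^m/(m!\,\varepsilon)$ together with the simple zero of $1/\G(1-2\al-i)$ at $\al=\h$ produces a finite limit. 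After this reduction the surviving sum should match the right-hand side of \eqref{2.1}: the constant $\pi$ comes from $\G(\h)^2=\pi$ appearing when we rewrite $1/(\G(\h+\h(r-i)))^2$, the factor $(2\nu+1)_i$ from $\binom{2\nu+i}{i}=(2\nu+1)_i/i!$ combined with the leftover pieces, and $\dfrac{2^{2i}\,i!}{(2i)!}=\dfrac{1}{(\h)_i}$ from the $2^{-2\al-i}$ prefactor evaluated at $\al=\h$.

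I would then carry out the analogous computation for the odd case, substituting $\al=\h$ into \eqref{1.9} to obtain \eqref{2.2}; here $\G(-\al-\h i+\h r)=\G(-\h-\h i+\h r)$ governs which $r$-terms diverge, and the reflection/limit bookkeeping is parallel, yielding the shifted Pochhammer symbols $(1+\h(i-r))_\nu$ and $(\tfrac32+\h(i-r))_\nu$ and the overall factor $2$ and $(2\nu+2)_i$. The main obstacle I anticipate is precisely this limiting step: one must carefully track which of the $i+1$ terms in the $r$-sum survive the $\al\to\h$ limit and with what residue, since naive substitution gives $0\cdot\infty$. A clean way to organize it is to write $\al=\h+\varepsilon$, expand each Gamma to first order in $\varepsilon$, note that $1/\G(1-2\al-i)=1/\G(-i-2\varepsilon)\sim -2\varepsilon\cdot(-1)^i i!$, and observe that exactly the terms where the other Gamma has a pole contribute a compensating $1/\varepsilon$. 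Once the surviving index set and residues are identified, matching to the stated right-hand sides is a routine rearrangement of Pochhammer symbols and powers of $2$, using $\G(\h)=\sqrt\pi$ and the duplication formula where convenient. As a consistency check I would verify that setting $i=0$ recovers \eqref{1.1} and \eqref{1.2}, since then the $r$-sum has the single term $r=0$ and \eqref{1.4}, \eqref{1.5} are recovered directly.
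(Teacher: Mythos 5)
Your proposal is correct and follows essentially the same route as the paper: rewrite the sum as $\binom{n+i}{i}\,{}_2F_1[-n,\tfrac12;i+1;2]$ and evaluate via \eqref{1.8}--\eqref{1.9} with $\alpha=\tfrac12$. In fact you are more explicit than the paper about the one delicate point --- the $0\cdot\infty$ cancellation between $1/\Gamma(1-2\alpha-i)$ and the poles of $\Gamma(\tfrac12-\alpha-\tfrac12 i+\tfrac12 r)$ as $\alpha\to\tfrac12$ --- which the paper compresses into ``after some simplification.''
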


\begin{proof} 
The proof of the identities \eqref{2.1} and \eqref{2.2}  is straightforward.  For this, let us consider the sum for $i\in \mathbb{{N}}_{0}$:
$$S=\sum_{k=0}^{n} (-1)^{k}\, \binom{n+i}{k+i}\, 2^{-k}\, \binom{2k}{k}.$$
Making use of the identities
\begin{equation*}
(n-k)!= \frac{(-1)^{k}\,n!}{(-n)_{k}},\qquad 
\G(2z)=\frac{2^{2z-1}\,\G(z)\,\G(z+\h)}{\sqrt{\pi}},
\end{equation*}
we have after some simplification,
\begin{align}\label{2.3}
S&=\frac{\G(n+1+i) }{\G(1+i)\,\G(n+1)} \sum_{k=0}^{n}  \frac{(-n)_{k}\,(\h)_{k}\,2^{-k}}{(1+i)_{k}\,k\,!}\\
&=\frac{\G(n+1+i) }{\G(1+i)\,\G(n+1)} \,{}_{2}F_{1}   \left[ \begin{array}[c]{ccccc} -n,\quad \h\\ 1+i\end{array};2 \right].\notag
\end{align}

Now for $n=2\nu$ (even) and $n=2\nu+1$ (odd), the $_2F_1$ appearing in \eqref{2.3}can be evaluated with the help of the known results \eqref{1.8} and \eqref{1.9} and  after some simplification, we easily arrive at the results \eqref{2.1} and \eqref{2.2} asserted by the theorem. This completes the proof of \eqref{2.1} and \eqref{2.2}.
\end{proof}

In the next section, we shall mention the known summations presented in Section 1 as well as two new cases of our main findings.

\vspace{0.5cm}

\section{Corollary} 

1. If we take $i=0$ in Theorem 2.1, we obtain
\begin{equation}\label{3.1}
\sum_{k=0}^{2\nu} (-1)^{k}\, \binom{2\nu}{k}\, 2^{-k}\, \binom{2k}{k}=\frac{(\h)_{\nu}}{(1)_{\nu}}
\end{equation}
and 
\begin{equation}\label{3.2}
\sum_{k=0}^{2\nu+1} (-1)^{k} \,\binom{2\nu+1}{k} \,2^{-k} \,\binom{2k}{k}=0,
\end{equation}
which are equivalent to Knuth's old sums (or the Reed Dawson identities) \eqref{1.1} and \eqref{1.2}.

\vspace{0.5cm}
2.  If we take $i=1$ in Theorem 2.1, we obtain
\begin{equation}\label{3.3}
\sum_{k=0}^{2\nu} (-1)^{k}\, \binom{2\nu+1}{k+1}\, 2^{-k}\, \binom{2k}{k}=(2\nu+1)\,\frac{(\h)_{\nu}}{(1)_{\nu}}
\end{equation}
and 
\begin{equation}\label{3.4}
\sum_{k=0}^{2\nu+1} (-1)^{k} \,\binom{2\nu+2}{k+1} \,2^{-k} \,\binom{2k}{k}=(\nu+1)\,\frac{(\frac{3}{2})_{\nu}}{(2)_{\nu}},
\end{equation}
which are equivalent to the Riordan identities \eqref{1.6} and \eqref{1.7}.

\vspace{0.5cm}
3.  Finally, if we take $i=2$ and $i=3$ in Theorem 2.1, we find the following new results.
\begin{equation}\label{3.5}
\sum_{k=0}^{2\nu} (-1)^{k}\, \binom{2\nu+2}{k+2}\, 2^{-k}\, \binom{2k}{k}=\frac{1}{3}(4\nu+3)\,\frac{(\frac{3}{2})_{\nu}}{(1)_{\nu}}
\end{equation}
\begin{equation}\label{3.6}
\sum_{k=0}^{2\nu+1} (-1)^{k} \,\binom{2\nu+3}{k+2} \,2^{-k} \,\binom{2k}{k}=2\,\frac{(\frac{5}{2})_{\nu}}{(1)_{\nu}}.
\end{equation}
and
\begin{equation}\label{3.7}
\sum_{k=0}^{2\nu} (-1)^{k}\, \binom{2\nu+3}{k+3}\, 2^{-k}\, \binom{2k}{k}=\frac{1}{5}(8\nu+5)\,\frac{(\frac{5}{2})_{\nu}}{(1)_{\nu}}
\end{equation}
\begin{equation}\label{3.8}
\sum_{k=0}^{2\nu+1} (-1)^{k} \,\binom{2\nu+4}{k+3} \,2^{-k} \,\binom{2k}{k}=\frac{1}{5}(8\nu+15)\,\frac{(\frac{5}{2})_{\nu}}{(1)_{\nu}}.
\end{equation}
Similarly, other results can be obtained.

\vspace{0.5cm}\noindent
{\bf Acknowledgement} 

The research work of Insuk Kim is supported by Wonkwang University in 2020.

\vspace{0.5cm}\noindent
{\bf Authors' contributions} 

All authors contributed equally to writing of this paper. All authors read and approved the final manuscript.

\vspace{0.5cm}\noindent
{\bf Competing interest} 

The authors declare that they have no competing interests.

\vskip .5cm
\noindent {\bf Authors' affiliations}

Arjun K. Rathie: Department of Mathematics, Vedant College of Engineering and Technology (Rajasthan Technical University), Bundi, 323021, Rajasthan, India\\
E-mail : {arjunkumarrathie@gmail.com}\\

Insuk Kim: Department of Mathematics Education, Wonkwang University, Iksan, 570-749, Republic of Korea\\
E-mail: {iki@wku.ac.kr} \\

Richard B. Paris: Division of Computing and Mathematics, University of Abertay, Dundee DD1 1HG, UK\\
E-mail: {r.paris@abertay.ac.uk} \\

\end{document}